\tikzset { domaine/.style 2 args={domain=#1:#2} }
\tikzset{
xmin/.store in=\xmin, xmin/.default=-3, xmin=-3,
xmax/.store in=\xmax, xmax/.default=3, xmax=3,
ymin/.store in=\ymin, ymin/.default=-3, ymin=-3,
ymax/.store in=\ymax, ymax/.default=3, ymax=3,
}
\numberwithin{equation}{section}
\providecommand{\U}[1]{\protect\rule{.1in}{.1in}}
\definecolor{linkcolor}{rgb}{0.00,0.50,0.00}
\providecommand{\U}[1]{\protect\rule{.1in}{.1in}}
\newtheorem{theorem}{Theorem}[section]
\newtheorem{proposition}[theorem]{Proposition}
\newtheorem{remark}{Remark}[section]
\numberwithin{equation}{section}
\newcommand{\pical}{\mathcal{P}}
\newcommand{\dd}{\mathrm{d}}
\newcommand{\res}{\mathop{\hbox{\vrule height 7pt width .5pt depth 0pt \vrule height .5pt width 6pt depth 0pt}}\nolimits}
\newcommand{\deb}{\rightharpoonup}
\newcommand{\haus}{\mathcal H}
\newcommand{\Wc}{\mathcal{T}}
\newcommand{\lcal}{\mathcal L}
\newcommand{\E}{\mathcal E}
\newcommand{\ve}{\varepsilon}
\newcommand{\R}{\mathbb R}
\newcommand{\Pu}{\mathcal{P}_1(\R^d)}
\newcommand{\Pd}{\mathcal{P}_2(\R^d)}
\DeclareMathOperator{\spt}{spt}
\def\spt{{\rm{spt}}} 
\def\dd{{\rm d}} 
\begin{document}
\title[Dealing with moment measures via entropy and optimal transport]
{Dealing with moment measures\\ via entropy and optimal transport}
\author[F. Santambrogio]{Filippo Santambrogio}
\address{Laboratoire de Math\'ematiques d'Orsay, Universit\'e Paris-Sud, 91405 Orsay Cedex, France}
\email{filippo.santambrogio@math.u-psud.fr}
\maketitle

\begin{abstract} A recent paper by Cordero-Erausquin and Klartag provides a characterization of the measures $\mu$ on $\R^d$ which can be expressed as the moment measures of suitable convex functions $u$, i.e. are of the form $(\nabla u)_\#e^{- u}$ for $u:\R^d\to\R\cup\{+\infty\}$ and finds the corresponding $u$ by a variational method in the class of convex functions. Here we propose a purely optimal-transport-based method to retrieve the same result. The variational problem becomes the minimization of an entropy and a transport cost among densities $\rho$ and the optimizer $\rho$ turns out to be $e^{-u}$. This requires to develop some estimates and some semicontinuity results for the corresponding functionals which are natural in optimal transport. The notion of displacement convexity plays a crucial role in the characterization and uniqueness of the minimizers.
\end{abstract}
\section{Introduction}

We consider in this paper the notion of {\it moment measure} of a convex function, which comes from functional analysis and convex geometry. Given a convex function $ u:\R^d\to\R\cup\{+\infty\}$, we define its moment measure as 
$$\mu:=(\nabla u)_\#\rho,\qquad\mbox{where }\dd\rho=e^{- u(x)}\dd x.$$

The connection of this notion with the theory of optimal transport is straightforward from the fact that, by Brenier's Theorem, the map $\nabla u$ will be the optimal transport map for the quadratic cost $c(x,y)=\frac 12|x-y|^2$ from $\rho$ to $\mu$. 

In a recent paper, Cordero-Erausquin and Klartag (\cite{CorKla}) studied the conditions for a measure $\mu$ to be the moment measure of a convex function. First, they identified that an extra requirement has to be imposed to the function $ u$ in order the problem to be meaningful. The main difficulty arises in case $ u$ is infinite out of a proper convex set $K\subset\R^d$. In this case one needs to require some continuity properties of $ u$ on $\partial K$. Without this condition, every measure with finite first moment can be the moment of a function $ u$, which is in general discontinuous on $\partial\{ u<+\infty\}$. Also, there is a strong non-uniqueness of $ u$. On the contrary, if one restricts to convex functions $ u$ that are continuous $\haus^{d-1}-$a.e. on $\partial\{ u<+\infty\}$ (those functions are called {\it essentially continuous}, then there is a clear characterization: a measure $\mu$ is a moment measure if and only if it has finite first moment, its barycenter is $0$, and it is not supported on a hyperplane. Moreover, the function $ u$ is uniquely determined by $\mu$ up to space translations.

In \cite{CorKla}, the authors first prove that these conditions on $\mu$ are necessary, due to summability properties of log-concave densities, and they prove that they are sufficient to build $ u$ as the solution of a certain minimization problem.

Here we want to reprove the same existence result with a different method, replacing functional inequalities techniques with ideas from optimal transport. This aspect seems to be absent from \cite{CorKla} even if it is not difficult to translate most of the ideas and techniques of Cordero-Erasquin and Klartag into their optimal transport counterparts. The result is an alternative language, that is likely to be appreciated by people knowing optimal transport theory, while the community of functional inequalities could legitimately prefer the original one. The question of which approach will the colleagues working with both optimal transport and functional inequality prefer is an open and unpredictable issue\dots

The main idea justifying this approach is the following: many variational problems of the form 
$$\min \left\{\frac 12W_2^2(\rho,\mu)+ \int f(\rho(x))\,\dd x\right\}$$
have been studied in recent years, for different purposes (time-discretization of gradient flows, urba planning\dots\ see for instance \cite{buttazzo,demesave,T+GV} and Chapter 7 in \cite{OTAM}). The optimality condition of this problem reads, roughly speaking, as 
$$\phi+f'(\rho)=const,$$
where $\phi$ is the Kantorovich potential in transport from $\rho$ to $\mu$ for the cost $c(x,y)=\frac 12|x-y|^2$ (notice that the convex function appearing in Brenier's Theorem is given by $ u(x)=\frac 12 x^2-\phi(x)$: if $T=\nabla  u$ is the optimal transport, $-\nabla\phi$ is the optimal dispacement, i.e. $T(x)=x-\nabla\phi(x)$).

In the case $f(t)=t\ln t$, the condition above implies that $\rho$ is proportional to $e^{-\phi}$. In order to obtain the condition which is required we need to correct the above minimization, so that we get $ u$ instead of $\phi$. In order to do so, one needs to change the sign and insert a $\frac 12|x|^2$ term, which leads to
$$\min \left\{-\frac 12W_2^2(\rho,\mu)+ \frac 12\int |x|^2\,\dd\rho(x)+\E(\rho)\right\},$$
where $\E(\rho)$ denotes the entropy of $\rho$, defined as $\E(\rho):=\int\rho\ln\rho\,\dd x$ for $\rho\ll\lcal^d$, $\E(\rho)=+\infty$ for $\rho$ non absolutely continuous.
When $\mu$ has finite second moment and we minimize the above functional among measures $\rho$ supported on a given compact set $K$ it is easy to check that a minimizer exist and that we have $\rho=e^{- u}$, where $(\nabla u)_\#\rho=\mu$. Yet, it is highly possible that $ u$ is discontinuous at the boundary of $K$. Indeed, at least in the case where also $\mu$ is compactly supported, one sees that the function $ u$ must be Lipschitz (since its gradient is bounded), which means that it is bounded, and hence $\rho$ is bounded from below by a positive constant. Yet, $\rho=0$ and $ u=+\infty$ outside $K$ and $ u$ is not essentially continuous. This is a confirmation that every $\mu\in\Pu$ is a moment measure, if we accept convex functions $ u$ which are not essentially continous. 

The interesting case is the one where we minimize among measures $\rho\in\Pu$, without restricting their support to a compact domain. In this case the existence of an optimal $\rho$ is not evident (indeed, the term $W_2(\rho,\mu)$ is continuous for the weak convergence of probability measures on compact sets, but only l.s.c. on unbounded sets, and here it is accompanied by the negative sign, which makes it u.s.c., while we want to minimize). Also, the lower semicontinuity of the entropy term $\mathcal E(\rho)$ is more delicate on unbounded sets. Here comes into play the assumptions on $\mu$, which will allow to provide a bound on the first moment of a minimizing sequence $\rho_n$. Then, we can prove that $\rho$ is log-concave and that a precise representative of $\rho$ must vanish $\haus^{d-1}$-a.e. on the boundary of its support, which proves that $ u$ is essentially continous. In order to handle the case of measures $\rho,\mu\notin\Pd$, we will profit of the fact that the second moment part of $\frac 12W_2(\rho,\mu)$ and $ \frac 12\int |x|^2\,\dd\rho(x)$ cancel each other, and that everything is well-defined for $\rho,\mu\in \Pu$ if we transform the minimal transport problem for the cost $c(x,y)=\frac 12|x-y|^2$ into the maximal transport problem for $c(x,y)=x\cdot y$.

Hence, we study the minimization problem 
$$(P)\qquad\min\left\{\E(\rho)+\Wc(\rho,\mu)\;:\,\rho\in\Pu\right\},$$
where 
$$\Wc(\rho,\mu)=\sup\left\{\int(x\cdot y)\,\dd\gamma(x,y)\;:\;\gamma\in\Pi(\mu,\nu)\right\}.$$

As we announced above, we prove that this problem has a log-concave solution of the form $\rho=e^{- u}$, that $ u$ is essentially continuous and $\mu$ is the moment measure of $ u$. We also prove uniqueness up to translations of the minimizer and that the condition $\mu=(\nabla u)_\#\rho$, where $\dd\rho=e^{- u}$, is sufficient to minimize. This characterizes $ u$. Uniqueness and sufficient conditions will be based on the notion of displacement convexity in the space $\Pd$ endowed with the distance $W_2$. It is useful to notice that the displacement convexity of the entropy exactly corresponds to the Prekopa inequality, which expresses more or less the same fact, but at the level of convex functions.

\noindent {\bf Structure of the paper} After this introduction, Section 2 presents the main well-known tools from optimal transport theory (Wasserstein distances, geodesic interpolation\dots). The esitmates and semicontinuity of the entropy term that we provide at the end of the section are also well-known, but presented in alternative fashion.
Section 3 is devoted to the transport cost $\rho\mapsto \Wc(\rho,\mu)$ and to its estimates and semicontinuity in a $\Pu$ framework.
Section 4 is the core of the paper, and presents the variational problem that we need to solve in order to find the log-concave measure $\rho=e^{-u}$ that we aim at. In particular, we prove that $u$ is convex and essentially continuous.
In Section 5 we show that the condition $(\nabla u)_\#e^{-u}=\mu$ is actually equivalent to the fact that $e^{-u}$ minimizes our functional, using the notion of displacement convexity.
Finally, in Section 6 we compare our approach to the one in \cite{CorKla}, explaining why they are equivalent and how to pass from one to the other.

\noindent {\bf Acknowledgments} The authors would like to thank Guillaume Carlier and Dario Cordero-Erausquin for interesting discussions about this problem. These discussions have been made possible by the workshop ``New Trends in Optimal Transport'' organized at the Hausdorff Center for Mathematics in March 2015. 

\section{Few words on optimal transport, entropy and technical tools}

We recall here the main notions and notations that we will use throughout the paper. We refer to \cite{OTAM} (Chapters 1, 5 and 7) and to \cite{AmbGigSav,villani} for more details and complete proofs.

Given two probability measures $\mu,\nu\in \pical(\R^d)$ we consider the set of transport plans
$$\Pi(\mu,\nu)=\{\gamma\in\pical(\R^d\times \R^d):\,(\pi_x)_{\#}\gamma=\mu,\,(\pi_y)_{\#}\gamma=\nu,\}$$
i.e. those probability measures on the product space having $\mu$ and $\nu$ as marginal measures.

For a cost function $c:\R^d\times\R^d\to [0,+\infty]$ we consider the minimization problem
$$ \min\left\{\int c\,\dd\gamma\;:\:\gamma\in\Pi(\mu,\nu)\right\},$$
which is called the Kantorvich optimal transport problem for the cost $c$ from $\mu$ to $\nu$. In particular, we consider the case $c(x,y)=\frac 12|x-y|^2$. In this case the above minimal value is finite whenever $\mu,\nu\in\Pd$, where $\Pd:=\left\{\rho\in\pical(\R^d)\,:\,\int |x|^2\,\dd\rho(x)<+\infty\right\}$. 

For the above problem one can prove that the minimal value also equals the maximal value of a dual problem
$$\max\left\{ \int\phi \,\dd\mu+\int\psi\,\dd\nu\;:\;\phi(x)+\psi(y)\leq \frac12|x-y|^2\right\},$$
and that the optimal function $\phi$ may be used to construct an optimizer $\gamma$. Indeed, the optimal $\phi$ is locally lipschitz and semiconcave (in particular $x\mapsto \frac 12 |x|^2-\phi(x)$ is convex) on $\spt(\mu)$ and differentiable $\mu-$a.e. if $\mu\ll\lcal^d$; one can define a map $T:\R^d\to\R^d$ through $T(x)=x-\nabla\phi(x)$ and this map satisfies $T_\#\mu=\nu$ and $\gamma_T:=(id,T)_\#\mu$ (i.e. the image measure of $\mu$ through the map $x\mapsto (x,T(x))$) belongs to $\Pi(\mu,\nu)$ and is optimal in the above problem. Moreover, the map $T$ is the gradient of the convex function $u$ given by $u(x)=\frac 12 |x|^2-\phi(x)$ and is called the optimal transport map (for the quadratic cost $c(x,y)=\frac 12|x-y|^2$) from $\mu$ to $\nu$. The fact that the optimal transport map $T$ exists, is unique, and is the gradient of a convex function is known as Brenier Theorem (see \cite{Brenier polar}).

The same could be obtained if one withdrew from the cost $\frac 12|x-y|^2$ the parts $\frac12|x|^2$ and $\frac12|y|^2$ which only depend on one variable each (hence, their integral against $\gamma$ only depends on its marginals). In this case we are interested in a transport maximization problem
$$ \max\left\{\int (x\cdot y)\,\dd\gamma(x,y)\;:\:\gamma\in\Pi(\mu,\nu)\right\}$$
and the dual problem would become
$$\min\left\{ \int u \,\dd\mu+\int v\,\dd\nu\;:\;u(x)+v(y)\geq x\cdot y\right\}.$$
In this problem it is quite clear that any pair $(u,v)$ can be replaced with $(u,u^*)$ where $u^*(y):=\sup_x x\cdot y-u(x)$ is the Legendre transform of $u$ and is the smalles function compatible with $u$ in the constraint $u(x)+v(y)\geq x\cdot y$. Then, it is easy to see by the primal-dual optimality conditions that the optimal $\gamma$ and the optimal $u$ satisfy
$$ \spt(\gamma)\subset\{(x,y)\,:\,u(x)+u^*(y)=x\cdot y\}=\{(x,y)\,:\,y\in\partial u(x)\},$$
which shows that $\gamma$ is concentrated on the graph of a map $T$ (which is one-valued $\mu-$a.e., provided $\mu\ll\lcal^d$), given by $T=\nabla u$.
 
The value of the minimization problem with the quadratic cost may also be used to define a quantity, called Wasserstein distance, over $\Pd$:
$$W_2(\mu,\nu):=\sqrt{ \min\left\{\int |x-y|^2\,\dd\gamma\;:\:\gamma\in\Pi(\mu,\nu)\right\}}.$$
This quantity may be proven to be a distance over $\Pd$. Moreover, when restricted to the probabilities supported on a given compact set, i.e. to $\pical(K)$ with $K\subset \R^d$ compact, it metrizes the weak-* convergence of probability measures. The space $\Pd$ endowed with the distance $W_2$ is called Wasserstein space of order $2$ and denoted in this paper by $\mathbb W_2(\R^d)$. 

The geodesics in this space play an important role in the theory of optimal transport. If $\mu,\nu\in\Pd$ and $\mu\ll\lcal^d$, we define $\rho_t:=((1-t)id+tT)_\#\mu$, where $T$ is the optimal trnasport from $\mu$ to $\nu$. This curve $\rho_t$ happens to be a constant speed geodesic for the distance $W_2$ connecting $\mu$ to $\nu$ (in case neither $\mu$ nor $\nu$ are absolutely continuous, it is possible to produce a geodesic by taking $\rho_t:=(\pi_t)_\#\gamma$ where $\pi_t(x,y)=(1-t)x+t y$ and $\gamma$ is optimal in the Kantorovich problem, which gives the same result if $\gamma=\gamma_T$).

Once we know the geodesics in $\mathbb W_2(\R^d)$, one can wonder which functionals $F:\Pd\to\R$ are geodesically convex, i.e. convex along constant speed geodesics. This notion, applied to the case of the Wasserstein spaces, is also called {\it displacement convexity} and has been introduced by McCann in \cite{mccann}. It is very useful both to provide uniqueness results for variational problems and to provide sufficient optimality conditions. A related notion is that of {\it convexity on generalized geodesics}, which corresponds to $t\mapsto F(\rho_t)$ being convex every time that we take a triplet $\mu,\rho_0,\rho_1$, the optimal maps $T_0$ from $\mu$ to $\rho_0$ and $T_1$ from $\mu$ to $\rho_1$, and take $\rho_t=((1-t)T_0+tT_1)_\#\mu$. The curve $\rho_t$ is not in general the geodesic connecting $\rho_0$ to $\rho_1$. However, the conditions to guarantee displacement convexity and convexity on generalized geodesics are often very similar and most of the useful functionals which are actually used satisfy both notions, and we prenset this notion here only for the sake of completeness.

We are in particular interested in the entropy functional $\E$, defined as follows
$$\E(\rho):=\begin{cases}\int \rho(x)\ln(\rho(x))\,\dd x &\mbox{ if }\rho\ll\lcal^d,\\
					+\infty &\mbox{ otherwise.}\end{cases}$$
This functional is displacement convex as it satisfies the assumption required in \cite{mccann} (it is also convex on generalized geodesics, but we will not use it here). We also compute its derivative along a geodesic $\rho_t$. Suppose $\rho_t=(T_t)_\#\rho$, where $T_t=(1-t)id+tT$. From a simple change of variable, we have
$$\rho_t(T_t(x))=\frac{\rho(x)}{\det((1-t)I+tDT(x))}$$
(this formula, where $DT$ is the Jacobian matrix of $T$, is valid provided $T_t$ is countably Lipschitz and injective, which is the case whenever $T$ is an optimal transport). Hence we have
$$\E(\rho_t)=\int \ln(\rho_t(x))\,\dd\rho_t(x)=\int \ln\rho_t(T_t(x))\,\dd\rho(x)=\E(\rho)-\int \ln(\det((1-t)I+tDT(x)))\,\dd\rho(x).$$
Hence, we have
$$\frac{d}{dt}\left[\E(\rho_t)\right]_{|t=0}=-\int \nabla\cdot (T-id)\,\dd\rho.$$
Here the divergence is to be understood in the a.e. sense, as $T$ is countably Lipschitz. If we use $T=\nabla v$ with $v$ convex, this divergence is equal to $\Delta^{ac} v-d$, where $\Delta^{ac}$ is the absolutely continuous part of the distributional Laplacian of $v$, which is a measure.

We also need to underline two other properties of $\E$, in particular lower bounds and semicontinuity. We stress that both these conditions are easy when we look at measures in $\pical(K)$ with $|K|<\infty$, but become trickier on the whole space, because $\rho\ln\rho$ is not positive.
%

The last property that we need to recall is the semicontinuity of $\E$ w.r.t. the weak-* convergenc eof probability measures, under the extra condition of a bound on the first moment $\int|x|\,\dd \rho(x)$. The semicontinuity of functionals of the form 
$$\rho\mapsto \int f\left(\frac{\dd\rho}{\dd\lambda}(x)\right)\dd\lambda(x)$$
 is standard for convex and superlinear $f$ (which is the case for $f(t)=t\ln(t)$) whenever the reference measure $\lambda$ (which is the Lebesgue measure here) is finite (see for instance \cite{boubu}  or Chapter 7 in \cite{OTAM}). If $f$ is positive it it easy to get the same result by taking the supremum of functionals restricted to compact subsets, but this is not possible here. Yet, we can notice that for $\rho\ll\lcal^d$ we have
 $$\E(\rho)=\int \left(\rho(x)\ln(\rho(x))+e^{h(x)-1}-\rho(x)h(x)\right)\dd x+\int \rho(x)h(x)\,\dd x-\int e^{h(x)-1}\dd x$$
for any function $h$ such that $e^{h(x)-1}\in L^1(\R^d)$ and $h\in L^1(\rho)$. If we suppose $\rho\in \Pu$ we can take $h=-\sqrt{|x|}$. Then we can write $\E=\E_1+\E_2+\E_3$, where
\begin{eqnarray*}
\E_1(\rho)&=&\int \left(\rho(x)\ln(\rho(x))+e^{h(x)-1}-\rho(x)h(x)\right)\dd x\\
\E_2(\rho)&=&\int \rho(x)h(x)\,\dd x,\\
\E_3(\rho)&=&-\int e^{h(x)-1}\dd x.
\end{eqnarray*}
Notice that the integrand in $\E_1$ is positive (indeed, for every $a\in\R_+$ and $b\in\R$ we have $a\ln(a)+e^{b-1}\geq a\cdot b$). This allows to write $\E\geq \E_2+\E_3$, and hence we have
$$\E(\rho)\geq -\int\rho(x)\sqrt{|x|}\dd x-\int e^{-\sqrt{|x|}-1}\dd x\geq -C-\sqrt{\int|x|\,\dd\rho(x)},$$
where we used $\int \dd\rho(x)=1$ in the last inequality (H\"older inequality). This gives a bound from below of $\E(\rho)$ in terms of the square root of the first moment of $\rho$.

Then, we also express $\E_1$ as a supremum over compact sets, using the positivity of the integrand:
$$\E_1(\rho)=\sup_{K\subset \R^d,\,K\mbox{ compact}}\int \left(\rho(x)\ln(\rho(x))+e^{h(x)-1}-\rho(x)h(x)\right)\dd x.$$

Now, we observe that for every sequence $\rho_n\deb\rho$ with $\int|x|\dd\rho_n(x)\leq C$ and $\E(\rho_n)\leq C$ we also have weak convergence $\rho_n\deb\rho$ in $L^1$, and $\int \psi(x)\dd \rho_n(x)\to \int \psi(x)\dd \rho(x)$ for every sublinear function $\psi$ (i.e. satisfying $\lim_{|x|\to\infty}\psi(x)/|x|=0$, which is true for $\psi=h$). Hence, the term $\E_1$ is l.s.c., $\E_2$ is continuous and $\E_3$ is constant and finite for this type of convergence. Globally we can resume these facts in the following proposition.
\begin{proposition}\label{entropy}
\begin{enumerate}
\item The functional $\E:\Pu\to\R\cup\{+\infty\}$ is well-defined and satisfies $\E(\rho)\geq -C- \left(\int|x|\,\dd \rho(x)\right)^{1/2}$.
\item
For every sequence $\rho_n\deb\rho$ with $\int|x|\dd\rho_n(x)\leq C$ and $\E(\rho_n)\leq C$ we have $\liminf_n \E(\rho_n)\geq \E(\rho)$.
\item 
When restricted to $\Pd$, the functional $\E$ is geodesically convex in $\mathbb W_2$ and strictly convex on every geodesic $t\mapsto \rho_t=((1-t)id+tT)_\#\rho$ where the map $T$ is not a translation.
\item If $\rho_t=((1-t)id+tT)_\#\rho$, then the derivative at $t=0$ of $t\mapsto \E(\rho_t)$ is given by $-\int \nabla\cdot (T-id)\,\dd\rho$.
\end{enumerate}
\end{proposition}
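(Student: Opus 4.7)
The plan is to handle items (1)--(2) via the decomposition $\E = \E_1 + \E_2 + \E_3$ introduced just above the statement (with $h(x) = -\sqrt{|x|}$), and items (3)--(4) by direct manipulation of the McCann-type change-of-variables identity for the entropy that is also presented above.

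For (1), the three key ingredients are: first, $e^{h(\cdot)-1}\in L^1(\R^d)$, which reduces to a polar-coordinate computation $\int_0^\infty e^{-\sqrt r - 1}r^{d-1}\,\dd r<\infty$; second, the Young-type inequality $a\ln a + e^{b-1}\geq ab$ for $a\geq 0$ and $b\in\R$ (Fenchel inequality for $a\mapsto a\ln a$), which yields pointwise nonnegativity of the $\E_1$-integrand and hence $\E\geq \E_2+\E_3$; and third, Cauchy--Schwarz against the probability $\rho$, giving $\int\sqrt{|x|}\,\dd\rho \leq \bigl(\int|x|\,\dd\rho\bigr)^{1/2}$. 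For (2), under the standing hypotheses I would first extract weak-$L^1$ convergence from the entropy bound via the de la Vall\'ee Poussin criterion (since $t\ln t$ is superlinear), so that $\rho_n\deb\rho$ in $L^1$. Then $\E_3$ is constant; $\E_2$ is continuous because $h$ is sublinear and the first-moment bound lets one control the tails beyond any large ball uniformly in $n$; and $\E_1$ is l.s.c.\ as a supremum over compact sets $K$ of the classical l.s.c.\ functional $\rho\mapsto \int_K\bigl(\rho\ln\rho+e^{h-1}-\rho h\bigr)\,\dd x$ on the finite-measure set $K$ (standard semicontinuity for convex superlinear integrands, see \cite{OTAM}, Chapter~7).

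For (3) and (4), I would start from the identity
\[
\E(\rho_t) = \E(\rho) - \int \ln\det\bigl((1-t)I + t\,DT(x)\bigr)\,\dd\rho(x),
\]
using that $T=\nabla u$ for convex $u$ makes $DT=D^2 u$ symmetric and positive semidefinite $\rho$-a.e.\ (Alexandrov), which ensures $T_t = (1-t)\id + tT$ is countably Lipschitz and injective on a set of full measure, so that the area formula applies. Convexity of $t\mapsto \E(\rho_t)$ then follows from concavity of $t\mapsto \ln\det((1-t)I + tA)$ for positive semidefinite $A$, equivalent to Brunn--Minkowski concavity of $A\mapsto(\det A)^{1/d}$. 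Strict convexity on a given geodesic follows from strictness of this pointwise concavity unless $A=I$: if $T$ is not a translation then $DT\neq I$ on a set of positive $\rho$-measure, and integration preserves the strict inequality. For (4), differentiating the identity at $t=0$ and using $\tfrac{d}{dt}\ln\det(I+tB)\big|_{t=0}=\tr B$ with $B=DT-I$ yields the formula $-\int\nabla\cdot(T-\id)\,\dd\rho$.

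The main technical obstacle is the legitimacy of the change of variables and of the differentiation under the integral when $T$ is only countably Lipschitz rather than smooth: one must invoke Alexandrov second-order differentiability of $u$, together with the fact that the exceptional set where $T_t$ fails to be injective has measure zero. This is classical (see \cite{AmbGigSav,villani}) but requires care, particularly to accommodate the possibility that the derivative in (4) equals $-\infty$ when $\tr DT \notin L^1(\rho)$; under the implicit finite-integrability assumption the concavity of the integrand in $t$ produces monotone difference quotients and justifies the interchange of derivative and integral.
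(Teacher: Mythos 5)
Your proposal follows essentially the same route as the paper: the decomposition $\E=\E_1+\E_2+\E_3$ with $h(x)=-\sqrt{|x|}$, the Fenchel inequality $a\ln a+e^{b-1}\geq ab$, and H\"older for parts (1)--(2), and McCann's change-of-variables identity with the concavity of $t\mapsto\ln\det((1-t)I+tDT)$ for parts (3)--(4), which the paper handles by the same computation and a citation to \cite{mccann}. The additional care you flag about Alexandrov differentiability and the countably Lipschitz change of variables is exactly the point the paper delegates to \cite{AmbGigSav,villani}, so the argument is correct as written.
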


\section{The maximal correlation functional}\label{2}

For $\mu,\rho \in \Pu$, with $\int y\,\dd\mu(y)=0$, we define the following quantity:
\begin{eqnarray*}
\Wc(\rho,\mu)&:=&\sup\left\{\int(x\cdot y)\,\dd\gamma\;:\;\gamma\in\Pi(\mu,\nu)\right\}\\
&=&\inf\left\{\int  u\,\dd\rho+\int  u^*\,\dd\mu\;:\; u:\R^d\to\R\cup\{+\infty\}\mbox{ convex and l.s.c.}\right\}.
\end{eqnarray*}

We notice that, if $\rho,\mu\in\Pd$, then we also have
$$\Wc(\rho,\mu)=\frac12\int |x|^2\,\dd\rho(x)+\frac12\int |y|^2\,\dd\mu(y)-\frac 12W_2^2(\rho,\mu).$$

This functional $\Wc$ is a transport cost, but we also observe that it is the maximal correlation between $\rho$ and $\mu$, in the sense
$$\Wc(\rho,\mu)=\sup\left\{ \mathbb E[X\cdot Y]\;:\;X\sim\rho, Y\sim\mu\right\}.$$
For this reason, $\Wc$ will be called {\it maximal correlation functional}.

We are interestend in the following properties.
\begin{proposition}\label{lscT}
\begin{enumerate}
\item For every $\rho\in\Pu$, we have $\Wc(\rho,\mu)\in[0,+\infty]$.
\item
If $\rho$ and $\tilde\rho$ are one obtained from the other by translation, then $\Wc(\rho,\mu)=\Wc(\tilde\rho,\mu)$.
\item
If $\int x\,\dd\rho_n(x)=0$ and $\rho_n\deb\rho$, then $\liminf_n \Wc(\rho_n,\mu)\geq \Wc(\rho,\mu)$.
\item For every $\mu\in\Pu$ with $\int y\,\dd\mu(y)=0$ there exists a sequence $\mu_n$ of compactly supported measures with $\mu_n\deb\mu$ and $\int y\,\dd\mu(y)=0$ such that for every $\rho\in\Pu$ we have $\Wc(\rho,\mu_n)\to\Wc(\rho,\mu)$. 
\item For every $\rho\in\Pu$ there exists a sequence $\rho_n$ of compactly supported probabilities such that $\rho_n\deb\rho$, $\E(\rho_n)\to\E(\rho)$ and such that for every $\mu\in\Pu$ with $\int y\,\dd\mu(y)=0$ we have $\Wc(\rho_n,\mu)\to\Wc(\rho,\mu)$. 
\end{enumerate}
\end{proposition}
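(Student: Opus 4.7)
Items (1) and (2) follow from the primal representation. For (1), the product coupling $\rho\otimes\mu\in\Pi(\rho,\mu)$ gives $\int x\cdot y\,\dd(\rho\otimes\mu)=(\int x\,\dd\rho)\cdot(\int y\,\dd\mu)=0$ by Fubini and the zero-mean assumption on $\mu$, so $\Wc(\rho,\mu)\geq 0$. For (2), writing $T_v(x)=x+v$ and $\tilde\rho=(T_v)_\#\rho$, the map $\gamma\mapsto(T_v\times\id)_\#\gamma$ is a bijection $\Pi(\rho,\mu)\to\Pi(\tilde\rho,\mu)$, and the integral shifts by $v\cdot\int y\,\dd\mu=0$; the suprema therefore coincide.

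For (3), my strategy is to produce, for each $\gamma\in\Pi(\rho,\mu)$, a competitor $\gamma_n\in\Pi(\rho_n,\mu)$ with $\liminf_n\int x\cdot y\,\dd\gamma_n\geq\int x\cdot y\,\dd\gamma$; taking the supremum in $\gamma$ then yields the claim. Since $(\rho_n)$ is tight (narrow convergence to a probability measure), one can fix for each $\eta>0$ a ball $B_R$ with $\rho_n(B_R^c)<\eta$ uniformly in $n$. I would then split $\gamma=\gamma|_{B_R\times\R^d}+\gamma|_{B_R^c\times\R^d}$ and construct $\gamma_n$ by perturbing the first piece to match the first marginal $\rho_n|_{B_R}$ (using the weak convergence on the compact set $B_R$) and by coupling the ``escaping mass'' of $\rho_n$ against the complementary portion of $\mu$ via the product coupling. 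The contribution of this independent piece vanishes in the limit thanks to $\int y\,\dd\mu=0$, exactly as in (1); the zero-mean hypothesis on $\rho_n$ is used in balancing the marginals across the two pieces of the construction.

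For (4) I would set $\nu_n:=\mu|_{B_n}/\mu(B_n)$ and shift to zero mean: $\mu_n:=(T_{-m_n})_\#\nu_n$ where $m_n:=\int y\,\dd\nu_n$, which tends to $0$ by dominated convergence. Then $\mu_n$ is compactly supported, zero-mean, and $\mu_n\deb\mu$. The upper bound $\limsup\Wc(\rho,\mu_n)\leq\Wc(\rho,\mu)$ follows by extending any $\gamma'_n\in\Pi(\rho,\nu_n)$ to $\gamma:=\mu(B_n)\gamma'_n+\rho\otimes\mu|_{B_n^c}\in\Pi(\rho,\mu)$; the cross term $\int x\,\dd\rho\cdot\int_{B_n^c}y\,\dd\mu$ vanishes because $\int_{B_n^c}y\,\dd\mu\to 0$ by zero mean of $\mu$. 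The matching liminf is the symmetric version of (3), which applies since the definition $\sup_\gamma\int x\cdot y\,\dd\gamma$ is symmetric in $\rho$ and $\mu$. For (5), take $\rho_n:=\rho|_{B_n}/\rho(B_n)$; the explicit formula $\E(\rho_n)=\rho(B_n)^{-1}\int_{B_n}\rho\ln\rho\,\dd x-\ln\rho(B_n)$ and monotone convergence on the positive and negative parts of $\rho\ln\rho$ (the negative part being integrable thanks to the moment bound established in Proposition~\ref{entropy}) yield $\E(\rho_n)\to\E(\rho)$. The analogous extension $\gamma:=\rho(B_n)\gamma_n+\rho|_{B_n^c}\otimes\mu$ of any $\gamma_n\in\Pi(\rho_n,\mu)$, whose cross term again vanishes by zero mean of $\mu$, gives $\Wc(\rho_n,\mu)\leq\Wc(\rho,\mu)/\rho(B_n)$, while (3) applied after shifting $\rho_n$ to zero mean (via (2)) supplies the matching lower bound.

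The core difficulty is (3): the cost $x\cdot y$ is unbounded both above and below and no uniform moment bound beyond $\rho_n\in\Pu$ is available, so passing to the limit in the correlation integral requires combining tightness with the structural role of the zero-mean hypotheses. The escape-mass argument sketched above is what allows $\int x\,\dd\rho_n=0$ and $\int y\,\dd\mu=0$ to work together.
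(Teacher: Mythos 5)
Items (1) and (2) are correct and coincide with the paper's argument. Your constructions for (4) and (5) differ mildly from the paper's (which lumps the escaping mass of $\mu$ at its barycenter $v_n$, so that $\int u^*\,\dd\mu_n\leq\int u^*\,\dd\mu$ for every convex $u^*$ by Jensen and the upper bound falls out of the dual formulation, and which gets the upper bound in (5) from $\int u\,\dd\rho_n\to\int u\,\dd\rho$ for convex $u$); your primal ``extension'' couplings are acceptable substitutes. But both (4) and (5) reduce, exactly as in your sketch, to the lower semicontinuity in (3), and that is where the proposal has a genuine gap.

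The problem in (3) is the escape-mass piece. After reserving a sub-measure $\mu'_n\leq\mu$ of mass $\eta_n:=\rho_n(B_R^c)$ for the mass of $\rho_n$ outside $B_R$, the product coupling contributes $\frac{1}{\eta_n}\left(\int_{B_R^c}x\,\dd\rho_n\right)\cdot\left(\int y\,\dd\mu'_n\right)$. The first factor is bounded by $R$ thanks to $\int x\,\dd\rho_n=0$, but the second is \emph{not} $\eta_n$ times something small: $\mu'_n$ is whatever is left of $\mu$ after the main coupling, its barycenter has no reason to vanish, and dividing by the (possibly very small) mass $\eta_n$ can make this term blow up. The claim that ``the contribution of this independent piece vanishes thanks to $\int y\,\dd\mu=0$'' is therefore unjustified; it only works if $\mu'_n$ is a proportional slice $\eta_n\mu$, but then the main piece must be recoupled to $(1-\eta_n)\mu$ rather than to the $\mu$-marginal of $\gamma\res(B_R\times\R^d)$, and the marginal bookkeeping you allude to is precisely the unresolved difficulty. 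A second, more basic issue is that for an arbitrary $\gamma\in\Pi(\rho,\mu)$ with $\rho,\mu\in\Pu$ the integral $\int x\cdot y\,\dd\gamma$ need not even be well defined ($|x||y|$ need not be $\gamma$-integrable), so the starting point ``for each $\gamma$ produce a competitor'' already requires care. The paper's proof avoids all of this by working with the \emph{optimal} plans $\gamma_n$: it extracts a weak limit of the plans and a local Hausdorff limit of their (cyclically monotone) supports, concludes that the limit plan is optimal, and then integrates the monotonicity inequality $x\cdot y+x'\cdot y'\geq x\cdot y'+x'\cdot y$ in $(x',y')$ against $\gamma_n$, which by the zero-mean hypotheses yields $x\cdot y\geq-\Wc(\rho_n,\mu)\geq-C$ on $\bigcup_n\spt(\gamma_n)$. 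This uniform lower bound on the cost over the relevant supports is the missing idea: it makes $x\cdot y$ effectively bounded below and lower semicontinuous where it matters, and the liminf inequality follows from the standard semicontinuity of $\gamma\mapsto\int c\,\dd\gamma$. You should either adopt this route or import that cyclical-monotonicity lower bound into your construction.
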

\begin{proof}
In order to prove (1), just take $\gamma=\rho\otimes\mu\in\Pi(\rho,\mu)$. We get 
$$\Wc(\rho,\mu)\geq \int(x\cdot y)\,\dd(\rho\otimes\mu)=\left(\int x\,\dd\rho(x)\right)\cdot\left(\int y\,\dd\mu(y)\right)=0.$$
To prove (2), notice that every $\tilde\gamma\in\Pi(\tilde\rho,\mu)$ can be expressed as the translation of a $\gamma\in\Pi(\rho,\mu)$, in the sense $\int \phi(x,y)\,\dd\tilde\gamma(x,y)=\int\phi(x+v,y)\,\gamma(x,y)$, where $v$ is the vector translating $\rho$ into $\tilde\rho$. Applying this fact to $\phi(x,y)=x\cdot y$, we get $\Wc(\tilde\rho,\mu)=\Wc(\rho,\mu)+\int v\cdot y\,\dd\mu(y)=\Wc(\rho,\mu)$.

We now prove (3). We restrict to the case $\Wc(\rho_n,\mu)\leq C$ otherwise the statement is straightforward. We first take optimal plans $\gamma_n$ such that $\Wc(\rho_n,\mu)=\int(x\cdot y)\,\dd\gamma_n(x,y)$. From the tightness assumption on $\rho_n$, we infer that $\gamma_n$ are also tight. Hence, we can extract a converging subsequence $\gamma_n\deb\gamma\in\Pi(\rho,\mu)$. If we set $\Gamma_n:=\spt(\gamma_n)$, we have a sequence of closed sets in $\R^d\times\R^d$. We can extact a further subsequence locally Hausdorff converging to a closed set $\Gamma$. From the cyclical monotonicity of each $\Gamma_n$, we infer that $\Gamma$ is also cyclically monotone. Hence, $\gamma$ is also optimal, since its support is contained in $\Gamma$, which implies that it is cyclically monotone, a condition which is sufficient to guarantee optimality. Hence we have $\Wc(\rho,\mu)=\int(x\cdot y)\,\dd\gamma(x,y)$.

We just need to prove $\liminf_n \int(x\cdot y)\,\dd\gamma_n(x,y)\geq \int(x\cdot y)\,\dd\gamma(x,y)$. If $x\cdot y$ were a bounded continuous function, we would have equality. The problem is that it is not bounded. Yet, we can prove that it is bounded from below on $\bigcup_n \Gamma_n$, which is enough.

Indeed, take $(x,y),(x',y')\in\Gamma_n$. From cyclical monotonicity we can write
$$x\cdot y+x'\cdot y'\geq x\cdot y'+x'\cdot y.$$
If we integrate the above inequality w.r.t. $\dd\gamma_n(x',y')$ we get
$$x\cdot y+\Wc(\rho_n,\mu)\geq 0.$$
Indeed, $\int x\cdot y\,\dd\gamma_n(x',y')=x\cdot y$, $\int x'\cdot y'\,\dd\gamma_n(x',y')=\Wc(\rho_n,\mu)$, $\int x\cdot y\,\dd\gamma_n(x',y')=0$ and $\int x\cdot y\,\dd\gamma_n(x',y')=0$.
This proves $x\cdot y \geq -C$ on  $\bigcup_n \Gamma_n$ and allows to prove (3).

To prove (4), we take for instance $\mu_n:=\mu\res B(0,n)+\mu(B(0,n)^c)\delta_{v_n}$, where 
$$v_n:=\frac{1}{\mu(B(0,n)^c)}\int_{B(0,n)^c}y \,\dd\mu(y).$$ 
In this case we have $\int u^*\,\dd\tilde\mu_n\leq \int u^*\,\dd\mu$ for every convex function $ u^*$, which gives $\Wc(\rho,\mu_n)\leq \Wc(\rho,\mu)$. Combining this inequality with the semicontinuity result of (3) we get $\Wc(\rho,\mu_n)\to \Wc(\rho,\mu)$.

The same construction does not work for (5), as we want to guarantee convergence of entropies. In this case, if $\E(\rho)<+\infty$, we need to produce a sequence $\rho_n$ of absolutely continuous measures. We can take $\rho_n:=(\rho\res B(0,n))/\mu(B(0,n))$. In this case we can check explicitely that $\E(\rho_n)\to\E(\rho)$ by dominated convergence. Moreover, for every convex function $u$ we have $\int u\dd\rho_n\to\int u\dd\rho$ (first we subtract a linear part to $u$, using $\rho\in\Pu$, and then we are reduced to monotone convergence). Hence, along this sequence, the functional $\Wc(\cdot,\mu)$ is u.s.c. as an infimum of continuous functional. But the semicontinuity result of (3) provides the continuity.
\end{proof}
We are also interested in the following estimate on $\Wc$ as a function of $\rho$ in terms of the first moment of $\rho$. We first define the constant
$$c(\mu):=\frac{1}{2d}\inf\left\{\int|y\cdot e-\ell|\,\dd\mu(y)\;:\;e\in\mathbb{S}^{d-1},\ell\in\R\right\}.$$
Note that the infimum in the definition of $c(\mu)$ is actually a minimum (we minimize a function which is continuous in $e$ and $\ell$, coercive w.r.t. $\ell$, and $e$ lives in a compact set). For simplicity, we only state the estimate in the case where $\rho$ is absolutely continuous.
\begin{proposition}\label{lower bound T}
If $\int x\,\dd\rho(x)=0$ and $\rho\ll\lcal^d$, then we have $\Wc(\rho,\mu)\geq c(\mu)\int|x|\,\dd\rho(x)$. In particular, $\Wc$ satisfies an inequality of the form $\Wc(\rho,\mu)\geq c\int|x|\,\dd\rho(x)$ for $c>0$ and for every $\rho$ with $\int x\,\dd\rho(x)=0$, if and only if $\mu$ is not supported on a hyperplane.
\end{proposition}

\begin{proof}
Take $\rho$ such that $\int x\,\dd\rho(x)=0$ and $\rho\ll\lcal^d$ and select a vector $e\in \mathbb S^{d-1}$ such that $e\mapsto \int (x\cdot e)_+\,\dd\rho(x)$ is maximal. Set $A^+:=\{x:x\cdot e>0\}$ and $A^-:=\{x:x\cdot e<0\}$. By optimality conditions, this impies that $v^+:=\int_{A^+} x\,\dd\rho(x)$ is oriented as $e$. From the barycenter condition on $\rho$ and the fact that it does not charge the hyperplane $\{x\,:\,x\cdot e=0\}$, the vector  $v^-:=\int_{A^-} x\,\dd\rho(x)$ is opposite to $v^+$. Set $m^\pm:=\rho(A^\pm)$. We have $m^++m^-=1$, again from $\rho(\R^d\setminus(A^+\cup A^-))=0$.

Let $\ell$ be a value such that $\mu(\{x\,:\,x\cdot e>\ell\})\leq m^+$ and $\mu(\{x\,:\,x\cdot e<\ell\})\leq m^-$ (if $\mu$ does not give mass to the hyperplane $\{x\cdot e=\ell\}$ then both inequalities are equalities). Decompose $\mu$ into the sum of two measures $\mu^\pm$ such that
$$\spt\mu^+\subset \{x\cdot e\geq\ell\},\;\spt\mu^-\subset \{x\cdot e\leq\ell\},\;\mu^\pm(\R^d)=m^\pm.$$
Consider $\gamma$ obtained in the following way: distribute the mass of $\rho\res A^+$ onto that of $\mu^+$ via a tensor product, and do the same from $\rho\res A^-$ onto $\mu^-$, i.e.
$$\gamma=\frac{1}{m^+}(\rho\res A^+)\otimes\mu^++\frac{1}{m^-}(\rho\res A^-)\otimes\mu^-\in\Pi(\rho,\mu).$$
We have 
$$\int (x\cdot y)\dd\gamma=v^+\cdot  \frac{1}{m^+}\int y\,\dd\mu^+(y)+v^-\cdot  \frac{1}{m^-}\int y\,\dd\mu^-(y).$$
We use that $v^+$ is oriented as $e$, and $v^-=-v^+$, and we have
 $$v^+\cdot  \frac{1}{m^+}\int y\,\dd\mu^+(y)+v^-\cdot  \frac{1}{m^-}\int y\,\dd\mu^-(y)= \frac{|v^+|}{m^+}\int y\cdot e\,\dd\mu^+(y)- \frac{|v^+|}{m^-}\int y\cdot e\,\dd\mu^-(y).$$
 Since $\mu^+/m^+$ and $\mu^-/m^-$ are both probability measures, we can subtractthe same constant $\ell$ to the two integrals and get
 $$ \frac{|v^+|}{m^+}\int (y\cdot e-\ell)\,\dd\mu^+(y)- \frac{|v^+|}{m^-}\int (y\cdot e-\ell)\,\dd\mu^-(y)=|v^+|\int |y\cdot e-\ell|\,\dd(\left(\frac{\mu^+}{m^+}+(\frac{\mu^-}{m^-}\right).$$
 Then, we use $m^\pm\leq 1$ in order to estimate this last result from below with $|v^+||\int |y\cdot e-\ell|\,\dd\mu$. In order to conclude, we just need to observe that 
 $$\int |x|\,\dd\rho(x)\leq 2d \sup_{e\in \mathbb S^{d-1}} \int(x\cdot e)_+\,\dd\rho(x)=2d|v^+|.$$

The last part of the statement is easy: if $\mu$ is not concentrated on a hyperplane, then $c(\mu)>0$. If $\mu$ is concentrated on a hyperplane $H$, then take an arbitrary measure $\rho$ concentrated on the line $L$ orthogonal to $H$ and passing through the origin. One can choose it so that $\int x\,\dd\rho(x)=0$ and $\int |x|\,\dd\rho(x)>0$, while $\Wc(\rho,\mu)=0$.
\end{proof}

Finally, we also prove displacement convexity of $\Wc$ as a function of $\rho$.
\begin{proposition}\label{geodconvWc}
Let $\rho_0,\rho_1\in \Pd$ be absolutely continuous measures, and let $\rho_t=((1-t)id+tT)_\#\rho$ be the unique constant speed geodesic connecting them for the Wasserstein distance $W_2$. Then $t\mapsto \Wc(\rho_t,\mu)$ is convex on $[0,1]$. Its derivative at $t=0$ is larger than $\int (T(x)-x)\cdot y\,\dd\gamma(x)$ where $\gamma$ is an optimal transport plan from $\rho_0$ to $\mu$.
\end{proposition}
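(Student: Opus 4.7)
The plan is to handle both statements directly in the primal formulation. For the displacement convexity of $t \mapsto \Wc(\rho_t, \mu)$, the idea is to take an arbitrary competitor $\gamma_t \in \Pi(\rho_t, \mu)$ for the sup defining $\Wc(\rho_t, \mu)$ and decompose it into competitors on the $\rho_0$ and $\rho_1$ sides by exploiting the affine-in-$t$ identity $T_t(x) = (1-t)x + tT(x)$. For the lower bound on the derivative at $t=0$, I would instead use a single specific competitor built by pushing forward the optimal plan at $t=0$ via $T_t \otimes \id$.

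For convexity, given $\gamma_t \in \Pi(\rho_t, \mu)$, first disintegrate $\gamma_t = \int \delta_z \otimes \gamma_t^z\, \dd\rho_t(z)$ along its first marginal, and then pull the disintegration back to $\rho_0$ by setting $\tilde\gamma_0 := \int \delta_x \otimes \gamma_t^{T_t(x)}\, \dd\rho_0(x)$. Using $(T_t)_\#\rho_0 = \rho_t$, the measure $\tilde\gamma_0$ belongs to $\Pi(\rho_0, \mu)$, and $\tilde\gamma_1 := (T \otimes \id)_\#\tilde\gamma_0$ belongs to $\Pi(\rho_1, \mu)$. A direct computation then gives
$$\int x\cdot y\, \dd\gamma_t = \int T_t(x)\cdot y\, \dd\tilde\gamma_0 = (1-t)\int x\cdot y\, \dd\tilde\gamma_0 + t\int x'\cdot y\, \dd\tilde\gamma_1(x',y),$$
and since the two integrals on the right are bounded above by $\Wc(\rho_0, \mu)$ and $\Wc(\rho_1, \mu)$ respectively, taking the supremum over $\gamma_t$ yields $\Wc(\rho_t, \mu) \leq (1-t)\Wc(\rho_0, \mu) + t\Wc(\rho_1, \mu)$.

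For the derivative lower bound, fix an optimal plan $\gamma \in \Pi(\rho_0, \mu)$, so that $\Wc(\rho_0, \mu) = \int x\cdot y\, \dd\gamma$, and test $\Wc(\rho_t, \mu)$ with $\gamma_t := (T_t \otimes \id)_\#\gamma \in \Pi(\rho_t, \mu)$. This yields
$$\Wc(\rho_t, \mu) \geq \int T_t(x)\cdot y\, \dd\gamma(x,y) = (1-t)\Wc(\rho_0, \mu) + t\int T(x)\cdot y\, \dd\gamma(x,y),$$
and hence $\Wc(\rho_t, \mu) - \Wc(\rho_0, \mu) \geq t \int (T(x) - x)\cdot y\, \dd\gamma$. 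Dividing by $t > 0$ and letting $t \to 0^+$ delivers the claimed lower bound on the (right-)derivative, whose existence is guaranteed by the convexity just proven. I do not expect any serious obstacle here: the only slightly delicate step is the measurability of the disintegrations $\gamma_t^z$, which is standard for Borel plans on $\R^d \times \R^d$, while the identification of the marginals of $\tilde\gamma_0$ and $\tilde\gamma_1$ is an immediate consequence of $(T_t)_\#\rho_0 = \rho_t$ and $T_\#\rho_0 = \rho_1$.
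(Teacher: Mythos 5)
Your proof is correct. For the derivative bound you do exactly what the paper does: test $\Wc(\rho_t,\mu)$ from below with the plan $(T_t\times \id)_\#\gamma$, expand $T_t(x)\cdot y$ linearly in $t$, and use the convexity just established to pass from the difference quotient to the (right) derivative. For the convexity itself, however, you take a genuinely different route. The paper never touches transport plans here: it first assumes $\mu\in\Pd$, writes $\Wc(\rho,\mu)=\frac12\int|x|^2\,\dd\rho+\frac12\int|y|^2\,\dd\mu-\frac12W_2^2(\rho,\mu)$, invokes the known $(-1)$-displacement convexity of $-\frac12W_2^2(\cdot,\mu)$ (Theorem 7.3.2 in the Ambrosio--Gigli--Savar\'e reference) together with the $1$-convexity of the second moment, and then reaches general $\mu\in\Pu$ by the approximation of part (4) of Proposition \ref{lscT} plus a passage to the limit in the convexity inequality. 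Your disintegration-and-gluing argument is self-contained, works directly for $\mu\in\Pu$ with no approximation step, and in fact never uses that $T$ is an optimal map --- it shows convexity of $t\mapsto\Wc(((1-t)\id+tT)_\#\rho_0,\mu)$ along \emph{any} interpolation of this form, which is slightly stronger than what is claimed. Two small points to tidy up: (i) as written you only prove the three-point inequality at $(0,t,1)$; to get convexity on all of $[0,1]$ run the identical argument with $T_s$ and $T_u$ in place of $\id$ and $T$ (for $s<t<u$ the map $T_t$ is the corresponding convex combination of $T_s$ and $T_u$), or invoke the fact that restrictions of geodesics are geodesics; (ii) splitting $\int x\cdot y\,\dd\gamma_t$ into the two integrals against $\tilde\gamma_0$ and $\tilde\gamma_1$ presupposes that these integrals are well defined, an integrability issue for $\Pu$ marginals that the paper itself sidesteps by working in $\Pd$ first --- it can be handled by noting that the inequality is vacuous unless both $\Wc(\rho_0,\mu)$ and $\Wc(\rho_1,\mu)$ are finite, and by estimating the positive parts via $\bigl(T_t(x)\cdot y\bigr)_+\le(1-t)(x\cdot y)_++t\bigl(T(x)\cdot y\bigr)_+$. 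Neither point affects the substance of the argument.
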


\begin{proof} Suppose for a while that $\mu\in\Pd$. Then, it is well known that $\rho\mapsto -\frac12 W_2^2(\rho,\mu)$ is a $-1$-displacement convex functional (see \cite{AmbGigSav}, Theorem 7.3.2). On the other hand, $ \rho\mapsto \frac12\int|x|^2\,\dd\rho(x)$ is $1$-convex, and the sum of the two, which gives $\Wc(\rho,\mu)-\frac 12 \int|y|^2\,\dd\mu(y)$, is displacement convex. 

To obtain the proof for general $\mu$ in $\Pu$, one needs to approximate, and part (4) of Proposition \ref{lscT} allows to do so. Hence, if $\rho_t$ is a geodesic for $W_2$, the inequality $\Wc(\rho_t,\mu_n)\leq (1-t)\Wc(\rho_0,\mu_n)+t\Wc(\rho_1,\mu_n)$ passes to the limit and implies the displacement convexity of $\rho\mapsto \Wc(\rho,\mu)$.

For the last part of the statement, we just observe that
$$\Wc(\rho_t,\mu)\geq \int (x\cdot y)\,\dd ((T_t\times id)_\#\gamma)(x,y)=\int T_t(x)\cdot y\,\dd\gamma(x,y).$$
Differentiating this last term we obtain the desired expression.
\end{proof}

\section{A variational principle for moment measures}

As we sketched in the introduction, we consider the following variational problem. We fix $\mu\in\Pu$ with $\int y\,\dd\mu(y)=0$ and not supported on a hyperplane, and we want to solve
$$(P)\qquad\min\left\{\E(\rho)+\Wc(\rho,\mu)\;:\,\rho\in\Pu\right\}.$$

\begin{theorem}
The problem (P) admits a solution, which is unique up to translation. If $\rho$ is a solution, and $ u:\R\to\R\cup\{+\infty\}$ is a convex l.s.c. function such that $\Wc(\rho,\mu)=\int u\,\dd\rho+\int u^*\,\dd\mu$ with $ u=+\infty$ on $\{\rho=0\}$, then $\rho=ce^{- u}$.
\end{theorem}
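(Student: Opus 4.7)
My plan is to establish the three assertions in order: existence, the formula $\rho=ce^{-u}$, and uniqueness up to translation. For existence, since both $\E$ and $\Wc(\cdot,\mu)$ are invariant under translations of $\rho$ (Proposition \ref{lscT}(2) and a direct change of variables), I may take a minimizing sequence $\rho_n$ with $\int x\,\dd\rho_n=0$. Combining the lower bound $\E(\rho_n)\geq -C-\sqrt{\int|x|\,\dd\rho_n}$ of Proposition \ref{entropy}(1) with the coercive estimate $\Wc(\rho_n,\mu)\geq c(\mu)\int|x|\,\dd\rho_n$ of Proposition \ref{lower bound T} (with $c(\mu)>0$, since $\mu$ is not supported on a hyperplane) forces $\int|x|\,\dd\rho_n$ to stay bounded. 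Tightness follows by Markov; extracting $\rho_n\deb\rho$, the semicontinuity statements in Proposition \ref{entropy}(2) and Proposition \ref{lscT}(3) immediately identify $\rho$ as a minimizer.

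For the characterization, let $\rho$ minimize $(P)$ and $u$ be as in the statement. By the dual definition of $\Wc$, every probability density $\tilde\rho$ satisfies $\Wc(\tilde\rho,\mu)\leq \int u\,\dd\tilde\rho+\int u^*\,\dd\mu$, with equality at $\rho$ by the choice of $u$. Subtracting and using optimality of $\rho$ yields
$$
\E(\rho)+\int u\,\dd\rho\leq \E(\tilde\rho)+\int u\,\dd\tilde\rho\qquad\forall\,\tilde\rho\in\Pu
$$
(for $\tilde\rho\not\ll\lcal^d$ one has $\E(\tilde\rho)=+\infty$, so the inequality is automatic). Hence $\rho$ minimizes the strictly convex functional $\tilde\rho\mapsto\int\tilde\rho\ln\tilde\rho+\int u\,\tilde\rho$ over probability densities; the standard Gibbs computation identifies the unique minimizer as $ce^{-u}$ with $c=(\int e^{-u})^{-1}$. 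The hypothesis $u\equiv+\infty$ on $\{\rho=0\}$ makes this formula consistent with $\spt(\rho)\subset\{u<+\infty\}$.

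For uniqueness, suppose $\rho_0,\rho_1$ both minimize $(P)$. By the previous step each is an absolutely continuous log-concave density with zero barycenter, hence in $\Pd$ by the exponential decay of log-concave tails. Along the $W_2$-geodesic $\rho_t=((1-t)\id+tT)_\#\rho_0$ joining them, Proposition \ref{entropy}(3) provides strict displacement convexity of $\E$ unless $T$ is a translation, while Proposition \ref{geodconvWc} provides displacement convexity of $\Wc(\cdot,\mu)$. If $T$ were not a translation, $\rho_{1/2}$ would strictly improve the minimum, a contradiction. Therefore $T(x)=x+v$ and $\rho_1$ is a translate of $\rho_0$.

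The most delicate step is the characterization, because $u$ may take value $+\infty$ on a set of positive Lebesgue measure: one must check that $\int u\,\dd\rho$ is unambiguously finite, that the dual equality $\Wc(\rho,\mu)=\int u\,\dd\rho+\int u^*\,\dd\mu$ is preserved by the normalization $u\equiv+\infty$ on $\{\rho=0\}$ (i.e., this choice still realizes the infimum defining $\Wc$), and that the Gibbs minimization yields a minimizer whose support is automatically contained in $\{u<+\infty\}$. A secondary subtlety is verifying that centered log-concave probabilities belong to $\Pd$, so that the $W_2$-geodesic in the uniqueness argument is available; this follows from log-concave tail estimates but deserves an explicit mention.
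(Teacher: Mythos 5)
Your proposal is correct and follows essentially the same route as the paper: translation-normalize a minimizing sequence, combine the entropy lower bound with the coercivity of $\Wc$ from Proposition \ref{lower bound T} to bound first moments and pass to the limit by the two semicontinuity results, reduce the characterization to the auxiliary convex problem $\min_{\tilde\rho}\,\E(\tilde\rho)+\int u\,\dd\tilde\rho$ (the paper solves it by linearization, you by the Gibbs variational principle, which is the same computation), and obtain uniqueness from strict displacement convexity of $\E$ in $\Pd$. The caveats you flag at the end (finiteness of $\int u\,\dd\rho$, membership of log-concave centered densities in $\Pd$) are exactly the points the paper also records.
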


\begin{proof}
To prove existence of a solution, we take a minimizing sequence $\rho_n$. We can suppose that all $\rho_n$ have $0$ as their barycenter as translating them does not change the value of the two parts of the functional. We use the inequality
$$\E(\rho)\geq -C-\left(\int |x|\,\dd\rho(x)\right)^{1/2},$$
 and the inequality $\Wc(\rho,\mu)\geq c\int |x|\,\dd\rho(x)$ that we proved in Proposition \ref{lower bound T}. This implies that the moment $\int |x|\,\dd\rho_n(x)$ must be bounded. In particular, this gives tightness of the sequence $\rho_n$ and we assume $\rho_n\deb\rho$. Also, we know that the entropy $\E(\rho)$ is l.s.c. for the weak convergence when the first moment is bounded, and the semicontinuity of $\Wc$ along sequences with $\int x\,\dd\rho_n(x)=0$ was proven in Proposition \ref{lscT}.

This proves that a minimizer exists.

We first analyze the optimality conditions: if $\bar\rho$ is optimal and $ u$ is a convex function realizing the minimum in the definition of $\Wc(\bar\rho,\mu)$, then 
$$\rho\mapsto \int \rho\ln\rho\,\dd x+\int u\,\dd\rho$$
is minimal for $\rho=\bar\rho$. By standard convex minimization arguments this implies that $\bar\rho$ also minimizes the linearized functional
$$\rho\mapsto \int \rho(\ln\bar\rho+1)\,\dd x+\int u\,\dd\rho,$$
which implies that $\bar\rho$ is concentrated on the set of points where $\ln\bar\rho+1+ u$ is minimal. This means that $\bar\rho>0$ on every point where $ u<+\infty$, and on these points we need to have $\ln\bar\rho=C- u$, i.e. $\bar\rho=ce^{- u}$. This same formula also holds on $ u=+\infty$, since we necessarily have $\bar\rho=0$ on those points.

In particular, the optimal $\rho$ is a log-concave probability density. This implies that all its moments are finite, and we have $\rho\in\Pd$.

As for uniqueness, we suppose to have two minimizers $\rho_0,\rho_1$. We know that they must belong to $\Pd$. We use the displacement convexity of the entropy and of the $\Wc$ term and observe that the entropy is strictly convex on the geodesic $\rho_t$ unless $\rho_0$ and $\rho_1$ are obtained one from the other by translation. This gives uniqueness up to translation.
\end{proof}

\begin{remark}
We observe that (P) has no solutions if $\mu$ is concentrated on a hyperplane. Suppose this hyperplane is $\{x_d=0\}$ and take $\rho_n=\frac{1}{2^dn}\lcal^d\res (R_n)$ where $R_n=\{x=(x_1,\dots,x_d)\in\R^d\,:\,|x_i|\leq 1\;\mbox{ for }i=1,\dots,d-1, |x_d|\leq n\}$. In this case we have $\E(\rho_n)=-\ln(2n)$ and $\Wc(\rho_n,\mu)\leq \sqrt{d}\int|y|\,\dd\mu(y)$. Hence $\E(\rho_n)+\Wc(\rho_n,\mu)\to-\infty$.
\end{remark}
To prove the main result of the paper, we just need to prove that $ u$ is essentially continuous. This can be done in the following way. We recall that, given any solution $\rho$ to problem (P), we can choose as a  precise representative of $\rho$ the one given by $\rho=ce^{- u}$, with $ u$ convex and l.s.c. (hence $\rho$ is log-concave and u.s.c.).
\begin{theorem}
Let $\rho$ be the precise representative above of a solution of (P). Set $\Omega=\{ u<+\infty\}$. Then $\rho=0$ $\haus^{d-1}$-a.e. on $\partial\Omega$.
\end{theorem}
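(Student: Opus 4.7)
The plan is to argue by contradiction: suppose that $A:=\partial\Omega\cap\{u<+\infty\}$ has $\haus^{d-1}(A)>0$, so that $\rho=ce^{-u}$ takes strictly positive values on a portion of $\partial\Omega$ of positive $(d-1)$-Hausdorff measure. By lower semicontinuity of $u$ and $\sigma$-compactness, I can select a compact $K\subset A$ with $\haus^{d-1}(K)>0$ on which $u\le M$, whence $\rho\ge\delta:=ce^{-M}>0$ on $K$. The goal is to exhibit a smooth vector field that produces a strict first-order decrease of the functional $\E(\cdot)+\Wc(\cdot,\mu)$ at $\rho$, contradicting its minimality.

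The perturbation is $\rho_\e:=(\id+\e v)_\#\rho$ for a smooth, compactly supported $v:\R^d\to\R^d$. Proposition \ref{entropy}(4) gives the exact entropy variation $(d/d\e)|_{0^+}\E(\rho_\e)=-\int \Div(v)\,\dd\rho$, and Proposition \ref{geodconvWc} gives the lower bound $(d/d\e)|_{0^+}\Wc(\rho_\e,\mu)\ge\int v\cdot\nabla u\,\dd\rho$ (using that $\nabla u$ is the Brenier map from $\rho$ to $\mu$). To obtain the matching upper bound, I would first reduce to the case where $\mu$ has compact support via Proposition \ref{lscT}(4), and then extend $u$ beyond $\ov\Omega$ by the Lipschitz convex envelope $\tilde u(y):=\inf_{z\in\ov\Omega}[u(z)+L|y-z|]$ with $L>\sup_{\spt(\mu)}|y|$; this $\tilde u$ agrees with $u$ on $\ov\Omega$ and satisfies $\tilde u^*=u^*$ on $\spt(\mu)$, so $(\tilde u,\tilde u^*)$ is an admissible dual pair for $\Wc(\rho_\e,\mu)$ and a Taylor expansion on the interior gives $\Wc(\rho_\e,\mu)-\Wc(\rho,\mu)\le \e\int v\cdot\nabla u\,\dd\rho+o(\e)$.

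Summing the two first variations and integrating by parts via Gauss--Green in $\Omega$ (legitimate because $\rho=ce^{-u}$ is locally $BV$ with interior trace on $\partial\Omega$ equal to the precise representative, while $\partial\Omega$ is $\haus^{d-1}$-rectifiable and carries an outer unit normal $\nu$ defined a.e.), the interior contributions cancel thanks to the identity $\nabla\rho=-\rho\nabla u$ in $\Omega^\circ$, and one is left with
$$\frac{d}{d\e}\Big|_{\e=0^+}\bigl[\E(\rho_\e)+\Wc(\rho_\e,\mu)\bigr]=-\int_{\partial\Omega}(v\cdot\nu)\,\rho\,\dd\haus^{d-1}.$$
Choosing $v$ supported in a small neighborhood of $K$ with $v\cdot\nu>0$ on $K$, the right-hand side is bounded above by $-\delta\int_K v\cdot\nu\,\dd\haus^{d-1}<0$, contradicting the minimality of $\rho$. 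Hence $\rho=0$ $\haus^{d-1}$-a.e.\ on $\partial\Omega$.

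The delicate step I expect to be the main obstacle is obtaining the matching upper bound in the first variation of $\Wc$ when $v$ points outward: the canonical pair $(u,u^*)$ yields $\int u\,\dd\rho_\e=+\infty$ because $u=+\infty$ outside $\Omega$, so it must be replaced by a global convex Lipschitz extension whose Legendre transform still coincides with $u^*$ on $\spt(\mu)$. This is precisely why the reduction to compactly supported $\mu$ (Proposition \ref{lscT}(4)) is essential. Once this is in place, the first-variation identity is exact on both sides and the boundary integral pins down $\rho$ on $\partial\Omega$.
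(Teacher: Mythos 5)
Your strategy is genuinely different from the paper's: you compute a first variation along a smooth deformation $(\id+\ve v)_\#\rho$ and read the conclusion off a Gauss--Green boundary term, whereas the paper builds an explicit competitor by splitting the mass of a boundary layer $A_\ve$ of width $\ve$ into two halves, one of which is translated across $\partial\Omega$; this gains $\rho(A_\ve)\ln 2$ of entropy, and the increase of $\Wc$ is controlled by perturbing the dual potential. For \emph{compactly supported} $\mu$ your computation is essentially correct and arguably cleaner: the Lipschitz extension $\tilde u(y)=\inf_z(u(z)+L|y-z|)$ has $\tilde u^*=u^*$ on $\ov B(0,L)\supset\spt\mu$, the pair $(\tilde u,\tilde u^*)$ is admissible, $\tilde u=u$ on $\ov\Omega$ because $\nabla u\in\spt\mu$ a.e.\ on $\Omega$, and the identity $\nabla\rho=-\rho\nabla u$ inside $\Omega$ turns the sum of the two first variations into $-\int_{\partial\Omega}(v\cdot\nu)\rho\,\dd\haus^{d-1}$. (Note that only the \emph{upper} bound on the variation of $\Wc$ is needed for the contradiction; the lower bound from Proposition \ref{geodconvWc} plays no role, and in any case $\id+\ve v$ need not be an optimal map.)

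The gap is exactly where you flagged it, and the proposed fix does not close it. Proposition \ref{lscT}(4) gives compactly supported $\mu_n$ with $\Wc(\rho',\mu_n)\to\Wc(\rho',\mu)$ for each fixed $\rho'$, but $\rho$ is not a minimizer for the problem with datum $\mu_n$, so the reduction cannot be performed at the level of the theorem; it must happen inside the estimate $\Wc(\rho_\ve,\mu)-\Wc(\rho,\mu)\le\ve\int v\cdot\nabla u\,\dd\rho+o(\ve)$, and there it fails for lack of uniformity. Indeed the Lipschitz constant $L_n$ of the extended potential for $(\rho,\mu_n)$ must exceed $\sup_{\spt\mu_n}|y|\to\infty$; for the $O(\ve)$ mass that crosses $\partial\Omega$ the increment $\tilde u_n(x+\ve v(x))-\tilde u_n(x)$ is of order $L_n\ve$ (outside $\ov\Omega$ the extension grows with the full slope $L_n$), producing an error of order $L_n\ve^2$; and the defect $\Wc(\rho_\ve,\mu)-\Wc(\rho_\ve,\mu_n)$ is controlled only by the tail $\int_{B(0,n)^c}|y|\,\dd\mu$, which for a general $\mu\in\Pu$ decays arbitrarily slowly. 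One cannot in general choose $n=n(\ve)$ making simultaneously $L_{n(\ve)}\ve\to 0$ and the tail $o(\ve)$. This is precisely the difficulty the paper's proof is built around: instead of truncating $\mu$, it replaces the potential by $u_\delta=(u^*+\delta\chi)^*$ with $\chi$ convex, superlinear and $\mu$-integrable, which is finite on all of $\R^d$ while costing only $\delta\int\chi\,\dd\mu$ on the $\mu$ side and $\delta\chi^*(\ve/\delta)$ per unit of displaced mass; the scaling $\delta=c\ve$ then makes both errors first order with constants beaten by the first-order entropy gain. Your inf-convolution is the degenerate case where $\chi$ is the indicator of a ball; replacing it by such a $\chi$ repairs your first-variation argument, at the price of losing the exact boundary identity and retaining only an inequality with an adjustable constant --- which is all the contradiction requires.
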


\begin{proof} Suppose on the contrary that $\rho>0$ on a set of positive $\haus^{d-1}$ measure on $\partial \Omega$. Since $\Omega$ is a convex set, we can use local coordinates and assume that this set is of the form $A=\{(x_1,x')\in\R\times\R^{d-1}\, x'\in B, x_1=h(x')\}$, where $B\subset \R^{d-1}$ has positive Lebsegue measure and $h$ is a convex real-valued function. In the same chart, $\Omega$ would be locally expressed as the set of points satisfying $x_1>h(x')$. Up to reducing the set $A$ (and hence $B$), we can suppose that $\rho$ takes values in $[a,b]$, for $0<a<b<+\infty$. We also observe that $\rho$, as it is log-concave, is locally bounded. Define $A_\ve:=\{(x_1,x')\in\R\times\R^{d-1}\, x'\in B, x_1\in[h(x'),h(x')+\ve]\}$. Since $\rho(x_1,x')$ is a continuous function of $x_1\in[h(x'),h(x')+\ve]$ [(because of its log-concavity and of the representative that we chose), we easily get $\int_{A_\ve} \rho(x)\,\dd x=O(\ve)$. 

Now, we define a new density $\rho_\ve$ as a competitor in (P). We define $T:A_\ve\to\R^d$ by $T(x_1,x')=(x_1-\ve,x')$ and we set
$$\rho_\ve=\rho\res(A_\ve^c)+\frac 12 \rho\res(A_\ve)+\frac 12 T_\#(\rho\res(A_\ve)).$$
By computing the density of $\rho_\ve$ we can check 
$$\mathcal E(\rho_\ve)=\mathcal E(\rho)-\rho(A_\ve)\ln 2.$$

In order to estimate $\Wc(\rho_\ve,\mu)$ we take the optimal function $ u$ (realizing $\Wc(\rho,\mu)=\int u\,\dd\rho+\int u^*\,\dd\mu$) and we modify it into a function $ u_\delta$ defined as follows. First, take a convex, positive and superlinear function $\chi:\R^d\to\R$ such that $\int \chi(x)\,\dd\mu(x)<+\infty$. Such a function exists because $\mu\in\Pu$. Then we fix $\delta>0$ and we take $ u=( u^*+\delta\chi)^*$. We have
$$ u_\delta(x)\leq u(x)\;\mbox{for every $x\in\Omega$}\quad  u_\delta(T(x))\leq  u(x)+\delta\chi^*\left(\frac\ve\delta\right)\;\mbox{for every $x\in A_\ve$}.$$
We have 
$$\Wc(\rho_\ve,\mu)\leq \int u_\delta\,\dd\rho_\ve+\int( u^*+\delta\chi)\,\dd\mu\leq \Wc(\rho,\mu)+\frac 12\rho(A_\ve)\delta\chi^*\left(\frac\ve\delta\right)+\delta\int\chi\,\dd\mu.$$
The optimality of $\rho$ compared to $\rho_\ve$ provides
$$\rho(A_\ve)\ln 2\leq \frac 12\rho(A_\ve)\delta\chi^*\left(\frac\ve\delta\right)+\delta\int\chi\,\dd\mu.$$
Now, use $\rho(A_\ve)\geq c_0\ve$ and choose $\delta=c\ve$ for a small constant $c$ such that $c\int\chi\,\dd\mu<\frac 12 c_0\ln2$. This gives
$$\frac 12\rho(A_\ve)\ln 2\leq \frac{c\ve}{2}\rho(A_\ve)\chi^*(c^{-1}),$$
which is impossible as $\ve\to 0$.
\end{proof}

\section{Sufficient optimality conditions}

To complete the current study, it remains to prove that every log-concave density $\rho=e^{-u}$ such that $(\nabla u)_\#\rho=\mu$ and $u$ is an essentially continuous convex function $u:\R^d\to\R\cup\{+\infty\}$ is necessariyl a minimizer of $\E(\rho)+\Wc(\rho,\mu)$. This would explain that the variational principle of the previous section finds exactly all the desired functions $u$.

As this result is not the main core of the paper, this section will be little more sketchy than the rest, and will use some results from \cite{CorKla}. Anyway, we claim that the main points of the proof are present in the paper.

The main idea, already investigated in in \cite{BlaCar,BlaMosSan} for game theory purposes, is the fact that displacement convexity is sufficient to guarantee minimality when necessary conditions are satisfied.

The result is the following.

\begin{proposition}
Suppose that $u:\R^d\to\R\cup\{+\infty\}$ is an essentially continuous convex function, consider $\bar\rho=e^{-u}$ and suppose $(\nabla u)_\#\bar\rho=\mu$. Then $\bar\rho\in\Pu$ and it solves
$$\min\left\{\E(\rho)+\Wc(\rho,\mu)\;:\,\rho\in\Pu\right\}.$$
\end{proposition}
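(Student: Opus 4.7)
The plan is to combine displacement convexity with a first-order optimality check. By Proposition \ref{entropy}(3) and Proposition \ref{geodconvWc}, both $\E$ and $\Wc(\cdot,\mu)$ are displacement convex along $W_2$-geodesics of elements of $\Pd$, so $F(\rho) := \E(\rho) + \Wc(\rho,\mu)$ is also displacement convex. It therefore suffices to prove that for every competitor $\rho_1 \in \Pu$, along the geodesic $\rho_t$ from $\bar\rho$ to $\rho_1$, one has
$$\frac{d}{dt}\Big|_{t=0^+} F(\rho_t) \geq 0.$$
First, $\bar\rho = e^{-u} \in \Pd \subset \Pu$: the essential continuity of $u$, combined with the fact that $\mu = (\nabla u)_\#\bar\rho$ has zero barycenter and is not supported on a hyperplane, is exactly the setting in which log-concave densities have finite moments of all orders (this is part of the background of \cite{CorKla}). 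By Proposition \ref{lscT}(5) we may moreover reduce to $\rho_1$ absolutely continuous and compactly supported, so that Brenier's theorem gives a unique optimal map $T = \nabla\phi$ from $\bar\rho$ to $\rho_1$ and $\rho_t = ((1-t)\id + tT)_\#\bar\rho$.

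\textbf{First-order computation.} Set $V := T - \id$. By Proposition \ref{entropy}(4),
$$\frac{d}{dt}\Big|_{t=0} \E(\rho_t) = -\int \nabla\cdot V \,\dd\bar\rho.$$
Since $u$ is convex and $(\nabla u)_\#\bar\rho = \mu$, Brenier's theorem identifies $(\id,\nabla u)_\#\bar\rho$ as the optimal plan for $\Wc(\bar\rho,\mu)$, so Proposition \ref{geodconvWc} gives
$$\frac{d}{dt}\Big|_{t=0} \Wc(\rho_t,\mu) \;\geq\; \int V(x)\cdot \nabla u(x)\,\dd\bar\rho(x).$$
Adding these inequalities and using $\bar\rho = e^{-u}$, so that $\nabla e^{-u} = -e^{-u}\nabla u$, a formal integration by parts on $\Omega := \{u < +\infty\}$ yields
$$\int\big[V\cdot\nabla u - \nabla\cdot V\big]e^{-u}\,\dd x \;=\; -\int_{\partial\Omega} (V\cdot n)\, e^{-u}\,\dd\haus^{d-1}.$$
Now essential continuity of $u$ says that $u = +\infty$ at $\haus^{d-1}$-a.e.\ point of $\partial\Omega$, so $e^{-u}$ vanishes $\haus^{d-1}$-a.e.\ on $\partial\Omega$, and the boundary term is zero. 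Consequently $\frac{d}{dt}\big|_{t=0^+} F(\rho_t) \geq 0$, and displacement convexity of $F$ forces $F(\rho_1) \geq F(\bar\rho)$, proving optimality of $\bar\rho$.

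\textbf{Main obstacle.} The delicate point is to justify the integration by parts above in a fully rigorous way, since neither $u$ nor $T$ is classically smooth: $u$ is only convex and l.s.c.\ (its distributional Hessian may carry a singular part, and $\partial\Omega$ can be irregular), while $T = \nabla\phi$ is only countably Lipschitz, so that $\nabla\cdot V$ makes sense only in the a.e.\ sense, with a non-negative singular contribution from $D^2\phi$ that if anything strengthens the inequality. The standard way to make this argument rigorous, following \cite{CorKla} and in the spirit of \cite{BlaCar,BlaMosSan}, is to regularize: one approximates $u$ by strictly convex, finite-valued functions of the form $(u^* + \delta\chi)^*$ (the same trick used in the proof of the previous theorem) and/or approximates $\bar\rho$ and $\rho_1$ via the compactly-supported, smooth densities provided by Proposition \ref{lscT}(5), then passes to the limit using the semicontinuity and convergence results of Sections \ref{2} and the entropy section. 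At every stage, the boundary contribution is killed precisely by essential continuity of $u$, which is therefore the hypothesis that makes the whole scheme work.
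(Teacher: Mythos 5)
Your proposal follows essentially the same route as the paper: displacement convexity of $\E+\Wc(\cdot,\mu)$, reduction to compactly supported competitors via Proposition \ref{lscT}(5), the first-derivative formulas of Propositions \ref{entropy}(4) and \ref{geodconvWc}, and an integration by parts whose boundary contribution on $\partial\{u<+\infty\}$ is killed by essential continuity. The only organizational difference is that the paper splits the integration by parts in two steps, first invoking the inequality $d\int e^{-u}\,\dd x\geq \int x\cdot\nabla u(x)\,e^{-u(x)}\dd x$ from \cite{CorKla} and then integrating the remaining term by parts on balls $B(0,R)$ with explicit decay estimates, whereas you perform a single formal integration by parts over $\Omega$ and defer its justification to a sketched regularization.
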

\begin{proof} Let us consider an arbitrary $\rho\in\Pd$ with compact support and the geodesic $\rho_t=((1-t)id+tT)_\#\bar\rho$, where $T=\nabla v$ is the optimal transport from $\bar\rho$ to $\rho$. From the compact support assumption on $\rho$,we get that $T$ is bounded. From the displacement convexity of $\E$ and $\Wc(\cdot,\mu)$, in order to prove $\E(\rho)+\Wc(\rho,\mu)\geq \E(\bar\rho)+\Wc(\bar\rho,\mu)$ it is sufficient to prove
$$\frac{d}{dt}\left(\E(\rho_t)+\Wc(\rho_t,\mu)\right)_{|t=0}\geq 0.$$
The computation of the derivative is included in Propositions \ref{entropy} and \ref{geodconvWc} and we have 
$$\frac{d}{dt}\left(\E(\rho_t)+\Wc(\rho_t,\mu)\right)_{|t=0}\geq -\int (\Delta^{ac} v-d)\dd\bar\rho+\int (\nabla v(x)-x)\cdot \nabla u(x)\bar\rho(x)\dd x.$$
First we use the inequality
$$d\int e^{-u(x)}\dd x\geq \int x\cdot \nabla u(x) e^{-u(x)}\dd x$$
valid for essentially continuous convex functions $u$. This is actually an equality, but anyway the inequality we need is proven in \cite{CorKla}. Hence
$$\frac{d}{dt}\left(\E(\rho_t)+\Wc(\rho_t,\mu)\right)_{|t=0}\geq -\int (\Delta^{ac} v)\,\dd\bar\rho+\int \nabla v(x)\cdot \nabla u(x)\bar\rho(x)\dd x.$$
We then use $\Delta^{ac} v\leq \Delta v$ (as $v$ is convex and its distributional derivative is a positive measure) and we integrate by parts. We first do it on a ball $B(0,R)$:
$$-\int_{B(0,R)} (\Delta^{ac} v)\,\dd\bar\rho+\int_{B(0,R)} \nabla v(x)\cdot \nabla u(x)\bar\rho(x)\dd x=-\int_{\partial B(0,R)}\nabla v(x)\cdot n \bar\rho(x)\dd\haus^{d-1}(x).$$
We want to pass to the limit as $R\to \infty$. The secon integral may be handled using the fact that $\nabla u(x)\bar\rho(x)=-\nabla \bar\rho(x)$ and $\nabla\bar\rho\in L^1$ (this corresponds to $\mu\in\Pu$) and that $T=\nabla v\in L^\infty$.
In the first integral, we use $(\Delta v)(B(0,R))=\int_{\partial B(0,R)}\nabla v(x)\cdot n \dd\haus^{d-1}(x)\leq CR^{d-1}$, together with the exponential decay of $\bar\rho=e^{-u}$, so that $\bar\rho\in L^1(\Delta v)$. In the right-hand side, we use again the exponential decay of $\bar\rho$ with the polynomial explosion of $\haus^{d-1}(\partial B(0,R))$ and the boundedness of $\nabla v$.

Hence, we get
$$\frac{d}{dt}\left(\E(\rho_t)+\Wc(\rho_t,\mu)\right)_{|t=0}\geq\lim_{R\to\infty}-\int_{\partial B(0,R)}\nabla v(x)\cdot n \bar\rho(x)\dd\haus^{d-1}(x)=0.$$
 
 This proves the optimality of $\bar\rho$ when compared to compactly supported measures $\rho$. for a general $\rho$, we use part (5) of Proposition \ref{lscT}, which allows to approximate every $\rho\in\Pu$ with compactly supported measures $\rho_n$ so that $\E(\rho_n)+\Wc(\rho_n,\mu)\to\E(\rho)+\Wc(\rho,\mu)$.
\end{proof}

\section{From this variational problem to the one studied in \cite{CorKla}}

In this last section we want to make some short comments on (P) and connect it to the problem studied in \cite{CorKla}.


From the dual formulation of $\Wc$, we may re-write our problem as a min-min problem
$$\min\left\{\int\rho(x)\ln\rho(x)\,\dd x+\int  u(x)\rho(x)\,\dd x+\int  u^*(y)\,\dd\mu(y)\;:\;\rho\in\Pu, u\mbox{ convex}\right\}.$$
The approach of the present paper consists in considering $ u$ as a secondary variable: for every $\rho$ we compute the minimum over possible $ u$, which gives rise to the functional $\Wc(\rho,\mu)$. 

A different possible approach could consist in looking at $\rho$ as the secondary variable, i.e. considering for every $ u$ the optimal $\rho$. It is easy to see that one can compute 
$$\inf \left\{\int\rho(x)\ln\rho(x)\,\dd x+\int  u(x)\rho(x)\,\dd x\;:\;\rho\geq 0, \int\rho(x)\,\dd x=1\right\}$$
by a Lagrange multiplier approach, which means that we need to choose $\rho$ so that $\ln\rho(x)+ u=const$. Hence, $\rho$ is proportional to $e^{- u}$. Let us write $c=\int e^{- u(x)}\dd x$. We have $\rho=e^{- u}/c$ and we can compute
$$\int\rho(x)\ln\rho(x)\,\dd x+\int  u(x)\rho(x)\,\dd x=\frac 1c \int \left(e^{- u}(- u-\ln c)+ u e^{- u}\right)\dd x=-\ln c.$$
This means that we consider the functional $ u\mapsto -\ln(\int e^{- u}\dd x),$ which is a concave functional of $ u$, and we solve
$$\min\left\{\int u^*\,\dd\mu-\ln(\int e^{- u}\dd x)\;:\;  u\mbox{ convex}\right\}.$$

In the above minimization problem, the first term is convex, since $ u\mapsto u^*(y)$ is convex, but the second is concave. 

Notice that this transformation of a convex-concave minimization problem (i.e., the minimization of the difference of two convex functions) into another convex-concave minimization is what is usually known as {\it Toland duality} (see \cite{Toland1,Toland2}). In particular, we also refer to \cite{Car-toland} for the applications of this notion to the case of variational problems involving the term $-W_2^2$, and their connections to variational problems under convexity constraints. 

What \cite{CorKla} does, is to consider the same problem in terms of $ u^*$ instead of $ u$: the first term becomes linear and the second, magically, convex (thanks to a clever application of a quantitative Prekopa inequality, which orresponds, as we said in the introduction, to the displacement convexity of the entropy). One should not be astonished that they obtain a convex problem: convexity in $ u^*$ more or less corresponds to the displacement convexity of the functional that we study here in terms of $\rho$ (more precisely, convexity in $\nabla u^*$ corresponds to the convexity on generalized geodesics with base measure $\mu$).

\end{document}